\theoremstyle{definition}
\newtheorem{theorem}{Theorem}[section]
\newtheorem*{theorem*}{Theorem}
\newtheorem{example}[theorem]{Example}
\newtheorem{examps}[theorem]{Examples}
\newtheorem{lemma}[theorem]{Lemma}
\newtheorem{proposition}[theorem]{Proposition}
\newtheorem{remark}[theorem]{Remark}
\numberwithin{equation}{section}
\newcounter{conta}
\newcommand{\toto}{\rightrightarrows}
\def\<{\langle}
\def\>{\rangle}
\newcommand{\action}{\curvearrowright}
\newcommand{\R}{\mathbb R}
\newcommand{\Z}{\mathbb Z}
\newcommand{\Ss}{\mathbb S}
\newcommand{\id}{{\rm id}}
\newcommand{\hol}{{\rm Hol}}
\newcommand{\eps}{\varepsilon}
\renewcommand{\d}{{\rm d}}
\newcommand{\F}{\mathcal F}
\newcounter{comments}
\begin{document}

\title{On deformations of compact foliations}

\author{Matias del Hoyo}
\address{Departamento de Geometria - IME, Universidade Federal Fluminense. Rua Professor Marcos Waldemar de Freitas Reis, 24210-201, Niter\'oi, Brazil}
\email{mldelhoyo@id.uff.br}

\author{Rui Loja Fernandes}
\address{Department of Mathematics, University of Illinois at Urbana-Champaign, 1409 W. Green Street, Urbana, IL 61801, USA} 
\email{ruiloja@illinois.edu}

\thanks{MdH was partially supported by CNPq. RLF was partially supported by NSF grants DMS 1405671, DMS 1710884 and a Simons Fellowship. Both authors acknowledge the support of the \emph{Ci\^encias Sem Fronteiras} grant 401817/2013-0.}

\begin{abstract} 
We combine classic stability results for foliations with recent results on deformations of Lie groupoids and Lie algebroids to provide a cohomological characterization for rigidity of compact Hausdorff foliations on compact manifolds.
\end{abstract}

\maketitle

\vspace{-1cm}

\setcounter{tocdepth}{1} 
\tableofcontents 

\vspace{-1cm}


\section{Introduction}

A foliation $\F$ on a manifold $M$ is {\bf compact Hausdorff} if its leaves are compact and its orbit space is Hausdorff. If $\F$ is compact then its holonomy groups are finite and, by Reeb stability \cite{e}, a small saturated neighborhood of any leaf $L$ is equivalent to its {\bf linearization}. If $M$ is connected then the leaves without holonomy are all diffeomorphic to a {\bf generic leaf} $L_0$, and they comprise a dense open set.

\smallskip

We say that a foliation $\F$ is {\bf rigid} if any 1-parameter deformation $\tilde\F$ of it is obtained from the trivial deformation by conjugating with an isotopy of $M$. In this note we use {\bf Lie groupoids} and {\bf Lie algebroids} \cite{mm} to give a simple proof of the following fundamental result, illustrating the power of this formalism in classic problems of differential geometry and topology. 

\begin{theorem}\label{thm:main}
Let $M$ be a compact connected manifold, and let $\F$ be a compact Hausdorff foliation of $M$. Then $\F$ is rigid if and only if its generic leaf $L_0$ satisfies $H^1(L_0)=0$.
\end{theorem}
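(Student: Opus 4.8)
The plan is to reformulate rigidity of $\F$ in terms of the deformation cohomology of its holonomy groupoid and then to compute this cohomology using the local structure of compact Hausdorff foliations. First I would pass from the foliation to its holonomy groupoid $G=\hol(\F)\rightrightarrows M$, whose Lie algebroid is $A=T\F$ with anchor the inclusion $T\F\hookrightarrow TM$. Since $\F$ is compact Hausdorff, the holonomy groups are finite and the orbit space is Hausdorff, so $G$ is a \emph{proper} Lie groupoid. The key translation is that $1$-parameter deformations of $\F$ correspond to deformations of $G$ (equivalently of $A$), and that trivializing a deformation by an isotopy of $M$ corresponds exactly to the notion of trivial deformation in the groupoid sense; hence $\F$ is rigid if and only if $G$ is rigid as a Lie groupoid.

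Next I would invoke the deformation theory of Lie groupoids and algebroids. Infinitesimal deformations of $\F$ are classified, in the classical picture, by the first leafwise cohomology with values in the normal bundle, $H^1(\F;\nu)$ with $\nu=TM/T\F$ and the Bott connection; the coboundaries are precisely the infinitesimal deformations induced by isotopies of $M$, so $H^1(\F;\nu)$ is genuinely the space of infinitesimal deformations modulo trivial ones. Up to the standard degree shift this is the first deformation cohomology of $A$, and for the \emph{proper} groupoid $G$ a van Est argument identifies it with the corresponding groupoid deformation cohomology. The vanishing $H^1(\F;\nu)=0$ should then yield rigidity through a Moser-type argument: properness supplies a Haar system, and averaging lets me integrate an infinitesimal trivialization of a given deformation into an honest isotopy of $M$ conjugating it to the constant one. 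This settles the ``if'' half once the cohomological vanishing is established.

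The heart of the matter is then to compute $H^1(\F;\nu)$ for a compact Hausdorff foliation and to show it vanishes precisely when $H^1(L_0)=0$. Over the generic leaf $L_0$, which has trivial holonomy, the Bott connection makes $\nu|_{L_0}$ a flat bundle with trivial monodromy, so $\nu|_{L_0}\cong L_0\times\R^q$ and $H^1(L_0;\nu|_{L_0})\cong H^1(L_0)^{\oplus q}$, where $q=\operatorname{codim}\F$. I would use the structure theory of compact Hausdorff foliations --- Reeb stability and the linearization near each leaf, together with the Hausdorff orbit space --- to organize $H^1(\F;\nu)$ over $M/\F$: properness and the finiteness of the holonomy groups let me average, so that the contribution of each leaf $L$ is the holonomy-invariant part of $H^1(L;\nu|_L)$. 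Restriction to $L_0$ then gives a map $H^1(\F;\nu)\to H^1(L_0)^{\oplus q}$; showing this is an isomorphism --- the generic leaves forming a dense open fibered piece control the whole cohomology, while the finitely many holonomy leaves contribute only invariants already detected on $L_0$ --- yields $H^1(\F;\nu)=0$ if and only if $H^1(L_0)=0$.

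Finally, for the ``only if'' direction I would run the construction backwards: given $H^1(L_0)\neq 0$, a nonzero class in $H^1(\F;\nu)$ must be realized by a genuine, nontrivial $1$-parameter deformation of $\F$. I expect the main obstacle to lie here and in the preceding computation: one must show that such an infinitesimal deformation is unobstructed and integrates, and that the resulting family is not conjugate to the trivial one by any isotopy. The rigidity of the local model --- finite holonomy and the linearized form near every leaf --- should make the obstructions, living in a second deformation cohomology, vanish and permit an explicit integration, but controlling the global gluing across the orbit space, and ensuring that the integrated deformation remains nontrivial, is the delicate point that ties the groupoid deformation theory to the classical stability theorems.
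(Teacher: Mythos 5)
There is a genuine gap, and it sits at the very first step. You assert that $1$-parameter deformations of $\F$ correspond to Lie groupoid deformations of $G=\hol(\F)$, and hence that $\F$ is rigid if and only if $G$ is rigid. This is false, and taken literally it proves too much: for a compact Hausdorff foliation on compact $M$ the groupoid $\hol(\F)$ is compact and Hausdorff, hence \emph{always} rigid as a Lie groupoid, so your equivalence would make every compact Hausdorff foliation rigid, contradicting the ``only if'' half of the theorem. The failure is exactly the Langevin--Rosenberg example: for $\F$ the fibration $L\times\Ss^1\to\Ss^1$ with $H^1(L)\neq 0$, the family defined by $\eps\omega+\d\theta$ is a foliation deformation whose holonomy groupoid is \emph{not} of the form $\hol(\F)\times I$ --- the arrow manifold changes, since for $\eps\neq 0$ the source fibers become non-compact covers of $L$. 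The real content of the forward implication is precisely to show that the hypothesis $H^1(L_0)=0$ forces a foliation deformation to integrate to a groupoid deformation (equivalently, that the holonomy of $L\times 0$ inside the total foliation $\tilde\F$ equals the holonomy of $L$ inside $\F$, and that the slices $\F_t$ stay compact Hausdorff for small $t$). This is where Thurston stability enters, and it is entirely absent from your proposal. The same omission undermines your Moser argument: to trivialize the family you need the classes $c_t\in H^1(\F_t,\nu(\F_t))$ to vanish \emph{smoothly in $t$}, i.e.\ the vanishing of a class in $H^1$ of the total foliation $\tilde\F$ with coefficients in $(TM\times 0_I)/\tilde\F$; computing the cohomology of $\F_0$ alone does not give this, and without first knowing that $\F_t$ remains compact Hausdorff on a shrunken interval you cannot invoke properness to kill that group.

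Two further points. Your computation of $H^1(\F;\nu)$ is directionally reasonable (averaging over finite holonomy, density of generic leaves), but restriction to a single generic leaf is not an isomorphism onto $H^1(L_0)^{\oplus q}$: for a compact Hausdorff foliation this cohomology is a space of sections over the leaf space with fiber the holonomy-invariants of $H^1(L;\nu_x)$, so it is typically infinite-dimensional when $H^1(L_0)\neq 0$; only the vanishing statement is what you need, and it is obtained more efficiently from the Van Est isomorphism $H^1(\hol(\F),E)\cong H^1(\F,E)$ together with the vanishing of positive-degree cohomology of proper groupoids. Finally, the converse direction cannot be left to an unobstructedness argument: the obstruction group you would need to kill is $H^3_{\rm def}(\F)\cong H^2(\F,\nu)$, not the $H^2_{\rm def}$ you are trying to prove nonzero, and in any case nothing in your sketch certifies that the integrated family is non-trivial. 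The efficient route is an explicit local construction: take a closed non-exact $\omega$ on $L_0$, a bump function $\lambda$ on a transverse ball $B$, and deform the product foliation on $L_0\times B$ by $\eps\lambda(t)\omega+\d t_1,\d t_2,\dots,\d t_k$; this is stationary near the boundary, so it extends to $M$, and it is non-trivial because for $\eps\neq 0$ the nearby leaves become non-trivial coverings of $L_0$.
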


A foliation is the same as a Lie algebroid with injective anchor map. Such an algebroid is {\bf integrable} and admits two canonical integrations. One is the {\bf monodromy groupoid} $Mon(\F)\toto M$, which has arrows the homotopy classes of paths within a leaf. The other is the {\bf holonomy groupoid} $Hol(\F)\toto M$, which has arrows germs of transverse diffeomorphisms induced by a path.  $Hol(\F)\toto M$ is the smallest integration and it is a quotient of $Mon(\F)\toto M$, which is the largest source connected integration.

\smallskip


Our proof of Theorem \ref {thm:main} combines classic results such as {\bf Reeb and Thurston stability} \cite{e,t}, with new results on {\bf rigidity of Lie groupoids}, obtained independently in \cite{cms} (which uses cohomological methods) and in \cite{dhf2} (which uses Riemannian metrics on Lie groupoids). We will see later how  the condition in the theorem can be understood in terms of deformation cohomology \cite{cm,cms}, and this leads to a second proof of the theorem using a {\bf Moser's trick} argument for Lie algebroid deformations.

\smallskip

Although Theorem \ref{thm:main}  deals with deformations, i.e., with smooth curves on the {\bf moduli space} of foliations, it is closely related  with the stability results for foliations obtained by Epstein-Rosenberg \cite{er} and Hamilton \cite{ham}. In these works, the authors topologize the space of foliations using the {\bf $C^r$-topology}, and obtain criteria for any nearby foliation to be isomorphic to the given foliation. The precise relationship between those works and our contribution is rather delicate and we leave it to be explored elsewhere.

\vskip 5 pt

\noindent{\bf  Acknowledgments. } We are grateful to I.~Marcut for pointing out a mistake in our first proof of Theorem \ref{thm:main}, and to the anonymous referee for pointing out a mistake in our first version of the cohomological proof. We also thank M.~Crainic, J.N.~Mestre and I.~Struchiner for sharing with us a preliminary version of their preprint \cite{cms}.


%
%
%

\section{Some preliminaries}


Given $M$ a manifold and $\F$ a compact Hausdorff foliation, each leaf $L$ has finite holonomy group $H$, and we can find a small transverse $T$ to $L$ at $x$ such that $H$ acts on $T$ by diffeomorphisms. 
The Reeb Stability Theorem insures the existence of a saturated open $U\supset L$ and a foliated diffeomorphism $\phi:U\to (\tilde L\times T)/H$, where $\tilde L\to L$ is the regular covering corresponding to $H$, $\tilde L\times T$ is foliated by the second projection, and $H$ acts diagonally. Proofs and further details can be found in \cite{e}.


This can be seen as a linearization theorem: the normal bundle to a leaf $\nu(L)\to L$ is foliated by the linear holonomy group $H'$, and if $L'\to L$ is the covering space corresponding to $H'$, then we can recover $\nu(L)\cong (L'\times\R^q)/H'$. Then Reeb stability can be rephrased by saying that, if a leaf $L$ is compact and has finite holonomy $H$, then $H$ agrees with the linear holonomy $H'$, and there is a tubular neighborhood $\phi:\nu(L)\to U\subset M$ defining a foliated diffeomorphism with a saturated open.


The local model provides a neat description of the holonomy groupoid of a compact Hausdorff foliation $\hol(\F_U)\toto U$ restricted to the open $U$, as the quotient of the submersion groupoid $\tilde L\times\tilde L\times T\toto \tilde L\times T$ by the action of $H$. In particular, $\hol(\F_U)$ is Hausdorff and the source map is locally trivial with compact fibers, hence proper. Note that the source-fibers identify with the generic leaf $L_0$. This yields a characterization of compact Hausdorff foliations in terms of their holonomy groupoid. 

\begin{proposition}(cf. \cite[Thm 2.4.2]{cfm})
\label{prop:compact:foliations}
A foliation $\F$ on $M$ is compact if and only if its holonomy groupoid $\hol(\F)\toto M$ is Hausdorff and source-proper.
\end{proposition}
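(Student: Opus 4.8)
The statement is an equivalence, and the direct implication is essentially contained in the local model developed above, so the plan is to globalize that and then argue the converse. For the direct implication I would start from the Reeb local model: around every leaf there is a saturated open $U$ on which $\hol(\F_U)\toto U$ is the quotient of the submersion groupoid $\tilde L\times\tilde L\times T\toto \tilde L\times T$ by the finite group $H$. Such a quotient is Hausdorff and its source map is a locally trivial fibration with compact fibre, hence proper. Two things remain: to globalize properness and to globalize Hausdorffness. Properness of $s\colon\hol(\F)\to M$ is local on the target, and since each $U$ is saturated one has $s^{-1}(U)=\hol(\F)|_U$; covering $M$ by such $U$'s and using that $M$ is locally compact Hausdorff then upgrades local properness to global properness. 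For Hausdorffness I would separate two distinct arrows $g\ne h$: if $s(g)\ne s(h)$ or $t(g)\ne t(h)$ they are separated by pulling back disjoint opens of $M$ along $s$ or $t$; if they share source and target they lie over a single leaf and are separated inside the Hausdorff local model $(\tilde L\times\tilde L\times T)/H$. This last case is the delicate point: it is the Hausdorffness of the orbit space that rules out the ``vanishing cycle'' phenomenon responsible for non-Hausdorff holonomy groupoids in general, so this hypothesis is genuinely used here.

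For the converse I assume $\hol(\F)\toto M$ is Hausdorff and $s$ is proper, and first recover compactness of the leaves. The source fibre $s^{-1}(x)$ is compact by properness, and the leaf through $x$ is its image $t(s^{-1}(x))$ under the continuous target map, hence compact. It then remains to see that the orbit space is Hausdorff, and for this I would upgrade one-sided properness to full properness. Since inversion is a diffeomorphism of $\hol(\F)$ with $s\circ i=t$, the target map $t$ is proper as well. Given a compact $K\subset M\times M$, write $K\subseteq A\times B$ with $A,B\subset M$ compact; then $(s,t)^{-1}(K)$ is a closed subset of $s^{-1}(A)\cap t^{-1}(B)$, which is in turn a closed subset of the compact set $s^{-1}(A)$, and here Hausdorffness of $\hol(\F)$ is exactly what guarantees that this closed subset is compact. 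Thus $(s,t)\colon\hol(\F)\to M\times M$ is proper.

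Finally, from properness of $(s,t)$ I conclude: its image, the orbit equivalence relation $R=\im(s,t)\subset M\times M$, is closed; since the orbit projection $M\to M/\F$ is an open map (the saturation of an open set is open, as $s,t$ are submersions), an open quotient by a closed equivalence relation is Hausdorff, so $M/\F$ is Hausdorff and $\F$ is compact Hausdorff. The main obstacle in the whole argument is the global Hausdorffness of $\hol(\F)$ in the direct implication; everything else reduces to the standard interplay between properness, closedness, and the local model supplied by Reeb stability.
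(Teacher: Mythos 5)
Your proposal is correct. The paper itself does not prove this proposition: the forward implication is sketched in the paragraph preceding the statement (exactly your argument -- the Reeb local model exhibits $\hol(\F_U)$ over a saturated open $U$ as $(\tilde L\times\tilde L\times T)/H$, which is Hausdorff with source a locally trivial fibration with compact fibre, and one globalizes), and the converse is deferred to the cited reference. Your globalization of the forward direction is the right way to fill in what the paper leaves implicit, and your converse -- compactness of leaves as $t(s^{-1}(x))$, upgrading source-properness to properness of $(s,t)$, closedness of the orbit relation $R=\im(s,t)$, and Hausdorffness of an open quotient by a closed relation -- is a correct and standard completion. Two small inaccuracies worth noting, neither a gap: first, in upgrading to properness of $(s,t)$, the compactness of the closed subset $(s,t)^{-1}(K)\subset s^{-1}(A)$ needs no Hausdorffness at all (closed subspaces of compact spaces are always compact); where Hausdorffness actually enters is in $M\times M$ being locally compact Hausdorff, which is what makes a proper map closed and hence $R$ closed. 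Second, $t(s^{-1}(x))$ being a compact \emph{subset} of $M$ gives compactness of the leaf in its leaf topology only after observing that a closed leaf is embedded, so the two topologies agree; this is standard but deserves a word.
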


By a {\bf deformation} of a foliation $\F$ parametrized by some interval $0\in I\subset\R$ we mean a foliation $\tilde \F$ on the cylinder $M\times I$ that is tangent to the slices $M\times t$, and that restricts to $\F$ on $M\times 0$. Two deformations are {\bf equivalent} if, after restricting to a smaller interval $J$, they are related by a diffeomorphism fibered over $J$.
A deformation $\tilde \F$ is {\bf trivial} if it is equivalent to the product foliation $\F\times 0_I$. A foliation admiting only trivial deformations is called {\bf rigid}.

\smallskip


Foliations can be seen as Lie algebroids with injective anchor map. Given $A$ a Lie algebroid over $M$, a {\bf Lie algebroid deformation} $\tilde A$ is a Lie algebroid structure on the vector bundle $A\times 0_I
$ over the cylinder $M\times I$ such that the image of the anchor map $\rho(\tilde A)$ is included in $TM\times 0_I$, and such that the central fiber $\tilde A|_{M\times 0}$ is the original algebroid. Equivalent deformations and rigidity are defined as before.
Since the injectivity of the anchor map is an open condition we get the following:

\begin{lemma} (cf. \cite{cm})
If $M$ is compact then any Lie algebroid deformation $\tilde A$ of a foliation $\F$ is equivalent to a foliation deformation.
\end{lemma}


Similarly, given $G\toto M$ a Lie groupoid, a {\bf Lie groupoid deformation} is a Lie groupoid structure $G\times I\toto M\times I$ over the cylinder whose orbits are included in the slices $M\times t$ and such that it restricts to the original groupoid at time 0. Note that we are deforming the structure maps source, target, multiplication, unit and inverse, but keeping the manifolds of objects and arrows constant. Equivalences and rigidity are defined as before. Recently, the rigidity of compact Lie groupoids has been established independently in \cite{cms}, using a deformation cohomology theory for Lie groupoids, and in \cite{dhf2}, using the theory of Riemannian Lie groupoids:

\begin{theorem}[{\cite[Thm 7.4]{cms},\cite[Thm 5.0.3]{dhf2}}]
\label{thm:rigidity}
A compact, Hausdorff, Lie groupoid $G\toto M$ is rigid.
\end{theorem}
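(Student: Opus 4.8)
The plan is to regard a deformation not as a family of groupoids but as a single Lie groupoid over the cylinder, and then to trivialize it by a Moser-type argument. After shrinking $I$ to a compact subinterval, a deformation of $G\toto M$ amounts to a compact Hausdorff Lie groupoid $\mathcal G=G\times I\toto M\times I$ fibered over $I$; writing $G_t$ for its restriction at time $t$, so that $G_0=G$, rigidity is equivalent to producing a smooth $1$-parameter family of groupoid isomorphisms $\phi_t\colon G_0\to G_t$ with $\phi_0=\id$. Such a family is exactly the flow of a time-dependent vector field on $G$ that is \emph{multiplicative} (a groupoid morphism $G\to TG$) and that covers a vector field on $M$, so the whole problem reduces to solving an infinitesimal equation and then integrating it.

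First I would differentiate the deformation. The $t$-derivative of the structure maps of $\mathcal G$ defines, at each time, a cocycle $c_t$ in the deformation complex $C^\bullet_{\mathrm{def}}(G_t)$ of \cite{cms}, measuring the infinitesimal failure of the slices to be isomorphic. Trivializing the deformation to first order means writing $c_t$ as a coboundary, and the obstruction to doing so is the class $[c_t]\in H^2_{\mathrm{def}}(G_t)$. Thus the crux is the vanishing $H^2_{\mathrm{def}}(G)=0$ for every compact Hausdorff groupoid: given a primitive, its image is a time-dependent multiplicative vector field whose flow conjugates $G_t$ back to $G_0$, and the Moser argument assembles these into the desired isotopy $\phi_t$.

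To obtain this vanishing I would exploit properness and compactness through averaging. A compact Hausdorff groupoid carries a normalized Haar system with compact support, and integrating deformation cochains against it produces a contracting homotopy in positive degrees, the groupoid analogue of the classical vanishing of higher cohomology for compact Lie groups. Equivalently, in the Riemannian route of \cite{dhf2}, one averages an arbitrary metric on $\mathcal G$ into a genuine groupoid metric compatible with the projection to $I$; the horizontal lift of $\partial_t$ is then automatically multiplicative, its flow is by groupoid isomorphisms, and compactness guarantees that this flow is complete and carries $G_0$ onto every $G_t$.

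The main obstacle, on either route, is to keep \emph{multiplicativity} under control. Averaging must be performed so that the resulting primitive (respectively, the groupoid metric) is genuinely compatible with the multiplication, for an arbitrary contracting homotopy or an arbitrary metric will only integrate to diffeomorphisms, not to groupoid isomorphisms; and one must verify that the infinitesimal automorphism so produced integrates to a global flow of honest automorphisms, which is where compactness of $G$ and $M$ enters decisively. Once completeness and multiplicativity are secured, the local models supplied by the linearization theorem for proper groupoids patch together into the global trivialization, completing the proof.
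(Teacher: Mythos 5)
First, a point of order: the paper does not prove this statement. It is imported from \cite[Thm 7.4]{cms} and \cite[Thm 5.0.3]{dhf2}, and the surrounding text only records that the first reference argues via deformation cohomology and the second via Riemannian metrics on groupoids. Your sketch is therefore best read as an outline of those two external proofs, and at that level it is faithful: you correctly reduce rigidity to trivializing the cocycles $c_t\in C^2_{\mathrm{def}}(G_t)$ and running a Moser argument, and you correctly identify the alternative metric-averaging route.

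Taken as a proof, however, the two steps where all the work lies are asserted rather than established. The claimed contracting homotopy ``in positive degrees'' obtained by integrating against a Haar system does not exist as stated: $C^\bullet_{\mathrm{def}}(G)$ is not the differentiable cohomology of $G$ with coefficients in an honest representation (the adjoint representation is only a representation up to homotopy), so the standard vanishing for proper groupoids \cite[Prop 1]{c} does not apply directly, and the vanishing of $H^2_{\mathrm{def}}(G)$ for proper $G$ is a separate theorem in \cite{cms} requiring an averaging argument adapted to the deformation complex. Even granting it, pointwise vanishing of each $[c_t]$ is not enough for Moser: one needs primitives depending smoothly on $t$, i.e.\ the vanishing of a single class attached to the total groupoid $\mathcal G\toto M\times I$ (compare Remark \ref{rmk:smooth-vanishing} for the algebroid analogue), and this ``smooth vanishing'' is where compactness re-enters and is not addressed in your sketch. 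On the Riemannian route, the horizontal lift of $\partial_t$ for an averaged metric is not ``automatically multiplicative''; compatibility of a metric with the multiplication is a nontrivial condition on the space of composable arrows (the $2$-metrics of \cite{dhf1}), and the trivialization in \cite{dhf2} comes not from the flow of a multiplicative lift but from the linearization of the groupoid fibration $\mathcal G\to I$ via exponential maps. You flag multiplicativity yourself as ``the main obstacle,'' but flagging it is not resolving it: that obstacle is precisely the content of the cited theorems.
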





Starting with a foliation $\F$, any groupoid deformation of $\hol(\F)$ yields a deformation of $\F$ by differentiation, but the inverse procedure of integrating deformations is more subtle. Even though a deformation, viewed as a foliation $\F$ over the cylinder, can be integrated to its holonomy or monodromy groupoid, its integration may not be a deformation as defined above. For instance, the arrow manifold may differ from the cylinder $\hol(\F)\times I$, as shown in the following example.

\begin{example}(c.f. \cite{lr})
\label{example:foliation:H1}
Let $L$ be a compact manifold with $H^1(L)\neq 0$ and let $\F$ be the foliation on $L\times \Ss^1$ given by the second projection.
The forms $\eps\omega+\d \theta$ and $\d\eps$, where $[\omega]\neq 0 \in H^1(L)$, define a foliation deformation $\tilde \F$ on $L\times\Ss^1\times I$ tangent to the fibers of the projection 
$L\times\Ss^1\times I\to I$, $(x,\theta,\eps)\mapsto\eps$.
This deformation is non-trivial: if $\eps\neq 0$ then the leaves of $\F_\eps$ are non-trivial coverings of $L$. 
When integrating the deformation to the holonomy groupoid, $\hol(\tilde \F)\toto L\times\Ss^1\times I$, this is not a groupoid deformation, for the manifold of arrows is not constant in time, namely $\hol(\tilde \F)\neq \hol(\F)\times I$. In fact, note that at $\eps=0$ the holonomy groupoid $\hol(\F)=L\times L\times\Ss^1\toto L\times\Ss^1$ is compact. However, for $\eps\not=0$ the groupoid $\hol(\tilde \F_\eps)$ does not have compact source fibers and hence is not compact. 
\end{example}

\section{Proof of main Theorem}




Given $M$ a manifold, $\F$ a foliation, and $\tilde \F$ a foliation deformation, we can identify a leaf $L$ of $\F$ with the leaf $L\times 0$ of $\tilde \F$, and compare both holonomies, by restricting to $M\times 0$ a local transverse $\tilde T$ to $L\times 0$ at $(x,0)$ within $M\times I$.
$$r:\hol_{L\times 0}(\tilde \F)\to \hol_L(\F)$$
This map is clearly onto, and it is an isomorphism for a trivial deformation, but it might have a non-trivial kernel $K$, as in Example \ref{example:foliation:H1}, where $K=\Z$. Thurston stability assert that if a compact leaf satisfies $H^1(L)=0$ then either it has trivial holonomy or it has non-trivial linear holonomy. Next we use a variant of it to show that $K=0$ for compact Hausdorff foliations.

\begin{proposition}\label{prop:hol=hol}
If $M$ is compact, $\F$ is compact and $H^1(L_0)=0$, then $r$ gives a group isomorphism $\hol_{L\times 0}(\tilde \F)= \hol_L(\F)$ for any leaf $L$ of $\F$.
\end{proposition}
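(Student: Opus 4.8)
The plan is to show that the kernel $K$ of $r$ is trivial, for then $r$ is an isomorphism. Fix a leaf $L$ and let $\tilde L\to L$ be its holonomy covering. By the local model coming from Reeb stability, $\tilde L$ is diffeomorphic to the generic leaf $L_0$; in particular it is compact and, by hypothesis, $H^1(\tilde L)=0$. Pulling back the deformation $\tilde\F$ along the covering $\tilde L\times T\to U$ (times the identity on $I$) yields a foliation on a cylinder over a neighborhood of $\tilde L$ for which $\tilde L\times 0$ is a compact leaf, and whose holonomy group is precisely $K$: the loops in $\tilde L$ are exactly the loops in $L$ with trivial $\F$-holonomy, so their $\tilde\F$-holonomy germs are exactly the elements of $\ker r$. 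Hence it suffices to prove that the holonomy group of $\tilde L\times 0$ in this pulled-back foliation is trivial.

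Next I would analyze the holonomy germs making up $K$. Choosing a transverse $\tilde T$ adapted to the splitting into the transverse $T$ to $\F$ (of dimension $q$) and the deformation parameter $\eps$, every such germ preserves the slices $\{\eps=c\}$ and, lying in $\ker r$, restricts to the identity germ on the central slice $\{\eps=0\}$. Linearizing at the fixed point, the derivative is therefore unipotent of the block form $\bigl(\begin{smallmatrix} I & v\\ 0 & 1\end{smallmatrix}\bigr)$ with $v\in\R^q$. A direct computation of the chain rule shows that the assignment sending a class in $K$ to its vector $v$ is a group homomorphism $K\to(\R^q,+)$, hence factors through a class in $H^1(\tilde L;\R^q)$.

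Here is the crux, and this is where $H^1(\tilde L)=0$ is used twice. Since $H^1(\tilde L;\R)=0$, the homomorphism $K\to\R^q$ vanishes, so in fact the linear holonomy of every element of $K$ is trivial. Now both hypotheses of Thurston stability are met for the compact leaf $\tilde L\times 0$: vanishing $H^1$ and trivial linear holonomy. Thurston stability then forces the holonomy itself to be trivial, giving $K=0$ and thus the desired isomorphism $\hol_{L\times 0}(\tilde \F)=\hol_L(\F)$.

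The main obstacle is precisely the off-diagonal term $v$: because the deformation direction couples to the transverse $T$, the linear holonomy of $K$ is \emph{a priori} only unipotent rather than trivial, so Thurston stability cannot be applied directly — this is the ``variant'' that the statement alludes to. The resolution is the observation that the unipotent part is abelian and records a cohomology class on $\tilde L$, which $H^1(\tilde L)=0$ kills; only after this linear reduction does the genuine (nonlinear) Thurston stability apply. A secondary point needing care is the bookkeeping that identifies $K$ with an honest holonomy group on the holonomy cover, which is what lets us invoke stability for a leaf with vanishing first cohomology rather than for $L$ itself (whose holonomy group $\tilde H$ has a nontrivial linear part coming from $H$).
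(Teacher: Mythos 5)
Your argument is correct and follows essentially the same route as the paper: the unipotent block form $\bigl(\begin{smallmatrix} I & v\\ 0 & 1\end{smallmatrix}\bigr)$ of the linear holonomy on the kernel, the resulting homomorphism to $\R^q$ killed by $H^1(L_0)=0$, Thurston stability for the leaf with trivial linear holonomy, and the reduction of a general leaf to the generic one via the holonomy covering. The only difference is organizational — you pass to the covering $\tilde L\cong L_0$ at the outset and treat all leaves uniformly, whereas the paper first kills $\ker(\d r)$ for every leaf and only then invokes the covering — but the key ideas coincide.
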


\begin{proof}
Let us first show that the restriction map $r:\hol_{L\times 0}(\tilde \F)\to \hol_L(\F)$ induces an isomorphism on the linear holonomies $\d r:\d\hol_{L\times 0}(\tilde\F)\to \d\hol_L(\F)$, or equivalently, that the kernel $K'$ of $\d r$ is trivial.
Using coordinates $(x,t)$ with $x$ in $M$ and $t\in I$, we can represent the linear holonomy of a loop $\gamma$ as a matrix as below.
$$\d[\gamma]=\begin{bmatrix}
   \d r[\gamma] & v[\gamma]\\ 0 & 1
  \end{bmatrix}
$$

It follows from the local model and Reeb stability that the fundamental group of the generic leaf $\pi_1(L_0)$ is the kernel of the projection $\pi_1(L)\to \hol_L(\F)=\d \hol_L(\F)$. Therefore, there is an epimorphism $\pi_1(L_0)\to K'$. If $K'$ were not trivial, then the formula $\gamma\mapsto v[\gamma]$ would define a non-trivial group homomorphism $\pi_1(L_0)\to\R^n$, which is an absurd, since $H^1(L_0)=0$.

Suppose now that $L\cong L_0$ is a generic leaf, or equivalently, that $\hol_L(\F)=0$. Then, the linear holonomy of $\tilde\F$ at $L\times 0$ is also trivial, and we can conclude that $\hol_{L\times 0}(\tilde\F)=0$ by Thurston stability \cite[Thm2]{t}.

Finally, when $L$ is any leaf, we can reduce to the previous case by first restricting our attention to a small tubular neighborhood $U$ of $L$, and then considering the covering space $p:\tilde U\to U$ corresponding to $\pi_1(L_0)\subset\pi_1(L)$. The pullback foliations $p^*\F$ and $p\times\id_I^*\tilde\F$ have trivial holonomy at $\tilde L=p^{-1}(L)$ and $\tilde L\times 0$ respectively, so we can apply Thurston stability as in the previous case.
\end{proof}


We can use the previous proposition to integrate a foliation deformation to a groupoid deformation, if we use the holonomy groupoid.

\begin{proposition}\label{prop:H1}
Let $\F$ be a compact Hausdorff foliation on a compact connected manifold $M$ whose generic leaf $L_0$ satisfies $H^1(L_0)=0$. If $\tilde \F$ is a deformation of $\F$ then the restriction $\tilde \F|_{M\times J}$ to a smaller interval $J\subset I$ is a compact Hausdorff foliation, and its holonomy groupoid $\hol(\tilde \F|_{M\times J})$ is a groupoid deformation.
\end{proposition}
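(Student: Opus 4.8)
The plan is to deduce everything from Reeb stability applied to the deformed foliation $\tilde\F$ at the leaves lying over time $0$, using Proposition \ref{prop:hol=hol} as the input that keeps the holonomy finite. First I would fix a leaf $L$ of $\F$ and regard $L\times 0$ as a compact leaf of $\tilde\F$ inside $M\times I$. Proposition \ref{prop:hol=hol} identifies its holonomy group $\hol_{L\times 0}(\tilde\F)$ with $\hol_L(\F)$, which is finite because $\F$ is compact Hausdorff; this is exactly the point that breaks down in Example \ref{example:foliation:H1}, where $H^1(L_0)\neq 0$ forces an infinite holonomy in the time direction.

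With finiteness in hand, Reeb stability \cite{e} applies to $\tilde\F$ at $L\times 0$ and yields a saturated open $\tilde U_L\supset L\times 0$ on which $\tilde\F$ is its linearization $(\tilde L\times\tilde T)/H$, with $H=\hol_{L\times 0}(\tilde\F)$ finite. The key local claim is that this linear model is a \emph{trivial} deformation. Indeed, the time function $\tau\colon M\times I\to I$ is constant along leaves, hence descends to an $H$-invariant submersion on the transverse $\tilde T$, whose $t=0$ level is the original transverse $T$ to $L$ in $M$; this $T$ is an $H$-invariant summand on which $H$ acts as $\hol_L(\F)$. Since $H$ is finite, Maschke's theorem provides an $H$-invariant complement, necessarily a trivial summand along the time direction, so that $\tilde T\cong T\times I'$ with $H$ acting only on $T$. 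Consequently $\tilde U_L\cong U_L\times I'$ as foliated manifolds and $\tilde\F|_{\tilde U_L}\cong \F|_{U_L}\times 0_{I'}$; in particular the generic leaf stays $\cong L_0$ at every time and $\hol(\tilde\F)|_{\tilde U_L}\cong \hol(\F|_{U_L})\times I'$.

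Next I would globalize. Since $M\times 0$ is compact it is covered by finitely many $\tilde U_{L_1},\dots,\tilde U_{L_k}$, and the tube lemma produces an interval $J\ni 0$, $J\subset I$, with $M\times J\subset\bigcup_i\tilde U_{L_i}$. For every $t\in J$ each leaf of $\tilde\F$ then lies in some $\tilde U_{L_i}$ and is therefore compact with finite holonomy; as $M$ is compact, one checks that the local models exhibit $\hol(\tilde\F|_{M\times J})$ as Hausdorff and source-proper, so by Proposition \ref{prop:compact:foliations} the foliation $\tilde\F|_{M\times J}$ is compact Hausdorff.

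Finally, to see that $\hol(\tilde\F|_{M\times J})$ is a groupoid deformation I would consider the composite $\pi=\tau\circ\tilde s\colon \hol(\tilde\F|_{M\times J})\to J$ of the source map with time. By the previous step $\tilde s$ is proper and $M$ is compact, so $\pi$ is a proper submersion, and Ehresmann's theorem trivializes it as $\hol(\tilde\F|_{M\times J})\cong \hol(\F)\times J$, identifying the arrow manifold across time and thus presenting the holonomy groupoid as a deformation of $\hol(\F)$. The main obstacle is the local step: turning Reeb's linearization into an honest trivial deformation, which is precisely where $H^1(L_0)=0$ enters, through Proposition \ref{prop:hol=hol} (finiteness of the holonomy) and the resulting vanishing of the time-shear. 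A secondary subtlety is upgrading the local product identifications to a single global one compatible with all the structure maps, for which one must also verify Hausdorffness of the total holonomy groupoid before invoking Ehresmann.
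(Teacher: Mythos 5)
Your proposal is correct and follows essentially the same route as the paper: Proposition \ref{prop:hol=hol} gives finite holonomy of the leaves $L\times 0$, Reeb stability gives the local linear models, compactness of $M$ yields the restricted interval $J$ on which $\tilde\F$ is compact Hausdorff (via Proposition \ref{prop:compact:foliations}), and an Ehresmann-type trivialization of the source map identifies $\hol(\tilde\F|_{M\times J})\cong\hol(\F)\times J$. The only difference is that you supply extra local detail (the equivariant splitting $\tilde T\cong T\times I'$ by averaging over the finite holonomy group) which the paper leaves implicit, and you invoke plain Ehresmann on $\tau\circ\tilde s$ where the paper cites a fibered linearization of the source map; both are adequate for the fibered identification required.
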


\begin{proof}
By Proposition \ref{prop:hol=hol} we have that every leaf of $\tilde\F$ at time zero is compact with finite holonomy, hence it admits a local linear model. 
By Reeb stability, the orbit space $(M\times I)/\tilde\F$ is Hausdorff around $L\times 0$, and by compactness of $M$, it is so after restricting the cylinder to $M\times J$, with $J\subset I$ a smaller interval. 
Hence $\tilde\F|_{M\times J}$ is a compact Hausdorff foliation and its holonomy groupoid $\hol(\tilde\F)\toto M\times J$ is Hausdorff and source-proper.
The identification $\hol(\tilde\F)\cong\hol(\F)\times J$ fibered over $J$ follows from a semi-local version of Ehresmann theorem, namely the linearization of the source map around $M\times 0\subset M\times J$ \cite[Cor. 5.14]{dhf1}.
\end{proof}


We can now use rigidity of compact Lie groupoids to give a simple proof of rigidity of compact Hausdorff foliations:

\begin{proof}[Proof of Theorem \ref{thm:main}]
If the generic leaf $L_0$ satisfies $H^1(L_0)= 0$, and $\tilde \F$ is a deformation of $\F$ then, by Proposition \ref{prop:H1}, the groupoid $\hol(\tilde\F)$ can be regarded as a proper groupoid deformation of the compact groupoid $\hol(\F)\toto M$. Hence, by Theorem \ref{thm:rigidity}, $\hol(\tilde\F)$ is locally trivial and so is the deformation $\tilde \F$.

If the generic leaf $L_0$ satisfies $H^1(L_0)\ne 0$, we can adapt Example \ref{example:foliation:H1} to construct a non-trivial deformation. Let $B$ be a small ball with coordinates $t_1,\dots,t_k$ and define $\tilde\F$ on $L\times B\times I$ by the forms $\eps\lambda(t)\omega+\d t_1,\d t_2,\dots,\d t_k$, where $\lambda$ is such that $\lambda(0)=1$ and $\lambda(t)=0$ for $t$ close to $\partial B$. This is a non-trivial deformation of the product foliation on $L\times B$ that remains constant on the border. We can copy this deformation in a foliated tubular neighborhood $L\subset T\subset M$ of a generic leaf of $\F$, and extend it outside $T$ stationarily.
\end{proof}

\section{The cohomological proof}


A cohomological approach to deformations of Lie groupoids and Lie algebroids has been developed in \cite{cm,cms}. Every Lie algebroid $A$ has a {\bf deformation complex} $C_{\textrm{def}}(A)$ which can be defined as the cohomology of $A$ with coefficients on the adjoint representation (in general, a representation up to homotopy  \cite{ac}). Every Lie algebroid deformation $\{A_\eps:\eps\in I \}$ of $A$ yields a cocycle 
$$c_0=\left.\frac{\d}{\d \eps}\right|_{\eps=0}[,]_\eps\in C^2_{\rm def}(A)$$
and its class $[c_0]\in H^2_{\textrm{def}}(A)$ is invariant by equivalences of deformations \cite{cm}.


When $A=\F$ is a foliation on $M$, the adjoint representation is quasi-isomorphic to the representation $\F\action \nu(\F)$ on the normal bundle given by the {\bf Bott connection}, and the deformation cohomology of $\F$ agrees with the shifted Lie algebroid cohomology with coefficients, namely $H^\bullet_{\textrm{def}}(\F)\cong H^{\bullet-1}(\F,\nu(\F))$. This way, given a deformation $\tilde\F=\{\F_\eps:\eps\in I\}$ the class 
$[c_0]\in H^2_{\textrm{def}}(\F)$ 
constructed by Crainic and Moerdijk corresponds to the deformation cohomology class investigated by Heistch \cite{h}. Namely, the class of the $\F$-foliated cocycle with values in $\nu$ given by:
\begin{equation}
\label{eq:cocycle:Heistch}
c_0(v):=\pi^\perp_0\left(\left.\frac{\d}{\d \eps}\right|_{\eps=0}\pi_\eps(v)\right),\quad v\in \F
\end{equation}
where $\pi_\eps:TM\to \F_\eps$ and $\pi_\eps^\perp:TM\to \nu(\F_\eps)$ are orthogonal projections relative to some Riemannian metric.


Crainic and Moerdijk also established a cohomological characterization for trivial deformations when the base manifold $M$ is compact. In the case of foliations, a deformation $\tilde\F=\{\F_\eps:\eps\in I\}$ of a foliation $\F=\F_0$ is trivial if and only if the classes 
$$c_t(v):=\pi^\perp_t\left(\left.\frac{\d}{\d \eps}\right|_{\eps=t}\pi_\eps(v)\right),\quad v\in \F_t$$
vanish {\em smoothly with respect to $t$} (cf. \cite[Thm 2]{cm}). 

\begin{remark}\label{rmk:smooth-vanishing}
Let us discuss in more detail the smooth vanishing of the classes $c_t$. 
The normal bundle to the total foliation $\tilde\F$ has a subbundle given by 
$$K=\ker(\pi:\nu(\tilde\F)\to TI)=(TM\times 0_I)/\tilde \F.$$
The action $\tilde\F\action\nu(\tilde\F)$ given by the Bott connection $\nabla$ preserves $K$, and this leads to a complex $(C^\bullet(\tilde\F,K),\delta)$. The classes $c_t$ define together a global class $c\in C^1(\tilde\F,K)$ determined by $i_t^*c=c_t$ for all $t$, where $i_t:M\times \{t\}\to M\times I$ is the inclusion. The classes $c_t$ to vanish smoothly with respect to $t$ means that $[c]\in H^1(\tilde\F,K)$ is trivial, or equivalently, that there is a vector field $X$ on $M\times I$ tangent to the slices $M\times \{t\}$ such that $\delta(X\ {\rm mod}\ \tilde\F)=c$. The flow of such a time-dependent vector field $X=\{X_t\}$ gives an isotopy trivializing $\tilde\F$ (cf. \cite[Thm 2]{cm}).
\end{remark}


Given $G\toto M$ a Lie groupoid, $A_G$ its Lie algebroid, and $E\to M$ a vector bundle endowed with a representation of $G$, there is an induced representation $A_G\action E$ by differentiation, and the corresponding Lie groupoid and Lie algebroid cohomology are related by the so-called Van Est map
$$\textrm{VE}:H^\bullet(G,E)\to H^\bullet(A_G,E).$$
If the source-fibers have trivial first cohomology then it follows from a standard spectral sequence argument that $\textrm{VE}$ is an isomorphism on degree 1  \cite[Thm 4]{c}. 


\begin{proposition}\label{prop:def-coh}
Let $\F$ be a compact Hausdorff foliation on a compact connected manifold $M$ whose generic leaf $L_0$ satisfies $H^1(L_0)=0$. Then $H^2_{\textrm{def}}(\F)=0$, and more generally, $H^1(\F,E)=0$ for any representation $\F\action E$. 
\end{proposition}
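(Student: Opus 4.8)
The plan is to prove the stronger statement $H^1(\F, E) = 0$ for any representation $\F \action E$, and then recover $H^2_{\textrm{def}}(\F) = 0$ as the special case $E = \nu(\F)$ using the quasi-isomorphism $H^\bullet_{\textrm{def}}(\F) \cong H^{\bullet-1}(\F, \nu(\F))$ recalled just above the statement. So the main work is the vanishing of the degree-one foliated cohomology with arbitrary coefficients.

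The key idea is to transfer the computation from the Lie algebroid $\F$ to its holonomy groupoid $\hol(\F) \toto M$ via the Van Est map. First I would note that a representation of the algebroid $\F$ on $E$ integrates to a representation of $\hol(\F)$ on $E$: this is where properness and the integrability of $\F$ matter, and it is where I expect the only genuine subtlety to lie, since not every algebroid representation integrates in general, but for foliations the monodromy/holonomy groupoids and the compactness of leaves should make this work (one may need to pass through $\mon(\F)$ and descend, or invoke that for compact-leaf foliations the relevant coefficient systems integrate). Given such an integration, the Van Est map $\textrm{VE}: H^1(\hol(\F), E) \to H^1(\F, E)$ is available.

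Next I would invoke the hypothesis $H^1(L_0) = 0$ together with Proposition \ref{prop:compact:foliations}: since $\F$ is compact Hausdorff, the source fibers of $\hol(\F)$ are diffeomorphic to the generic leaf $L_0$ (as recorded in the preliminaries), and these fibers are connected and compact with trivial first cohomology. By the stated spectral sequence result \cite[Thm 4]{c}, the triviality of $H^1$ of the source fibers forces $\textrm{VE}$ to be an isomorphism in degree $1$. Therefore it suffices to show $H^1(\hol(\F), E) = 0$ at the groupoid level.

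Finally, the groupoid cohomology $H^1(\hol(\F), E)$ vanishes because $\hol(\F)$ is a Hausdorff, source-proper groupoid: properness yields a Haar system with a cutoff function supported along the compact source fibers, and the standard averaging argument produces a contracting homotopy for the differentiable groupoid cochain complex in positive degrees (this is the same averaging that underlies the rigidity Theorem \ref{thm:rigidity} and the vanishing of groupoid cohomology with coefficients for proper groupoids). Thus $H^1(\hol(\F), E) = 0$, hence $H^1(\F, E) = 0$ by the Van Est isomorphism. Taking $E = \nu(\F)$ and shifting degrees gives $H^2_{\textrm{def}}(\F) = 0$, completing the proof. The one step I would scrutinize most carefully is the integration of the representation from $\F$ to $\hol(\F)$; the averaging/properness argument and the Van Est isomorphism are both standard once that is in place.
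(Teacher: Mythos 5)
Your proposal follows the paper's proof step for step: reduce $H^2_{\textrm{def}}(\F)=0$ to $H^1(\F,\nu(\F))=0$, identify the source fibers of $\hol(\F)$ with $L_0$ so that \cite[Thm 4]{c} makes the Van Est map an isomorphism in degree one, and kill $H^1(\hol(\F),E)$ by averaging over the proper groupoid \cite[Prop 1]{c}. So the route is the same. The one step you flag as delicate, and leave open, is indeed the crux --- and it does not go through for an \emph{arbitrary} representation $\F\action E$.

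Concretely: a representation of $\F$ on $E$ is a leafwise flat connection, and it always integrates to a representation of $\mon(\F)$ by parallel transport; but to descend to $\hol(\F)$ you need the monodromy of $E$ along every leafwise loop with trivial holonomy germ to be the identity, i.e.\ the representation $\pi_1(L)\to GL(E_x)$ must kill the subgroup $\pi_1(L_0)\subset\pi_1(L)$. The hypothesis $H^1(L_0)=0$ only forces homomorphisms $\pi_1(L_0)\to(\R^n,+)$ to vanish (this is exactly what is used in Proposition \ref{prop:hol=hol}); it does not force homomorphisms $\pi_1(L_0)\to GL(E_x)$ to be trivial, since $\pi_1(L_0)$ may be a nontrivial group with finite abelianization. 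Nor can you stay with $\mon(\F)$, whose source fibers are simply connected: it need not be source-proper, so the averaging step fails there. In fact the unrestricted statement is doubtful: take $L_0=(\Ss^1\times\Ss^2)/(\Z/2)$ (the $\Z/2$ acting by conjugation on $\Ss^1$ and the antipode on $\Ss^2$), which has $H^1(L_0;\R)=0$, foliate $M=L_0\times\Ss^1$ by the fibers of the projection to $\Ss^1$, and let $E$ be the pullback of the flat line bundle on $L_0$ determined by the sign character of $\pi_1(L_0)\to\Z/2$; a transfer argument gives $H^1(L_0;E)\cong\R\neq 0$, hence $H^1(\F,E)\neq 0$. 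What your argument (and the paper's) actually proves is $H^1(\F,E)=0$ for representations $E$ that come from $\hol(\F)$. This suffices for the first assertion and for everything used later in the paper, because the Bott connection on $\nu(\F)$ (and on the subbundle $K$ in Remark \ref{rmk:smooth-vanishing}) is the derivative of the canonical linear-holonomy action of $\hol(\F)$. So: same approach, correct for the case that matters, but you should restrict the coefficient systems rather than hope the integration step ``should work''.
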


\begin{proof}
Consider the Van Est map corresponding to $G=\hol(\F)$ acting over some vector bundle $E$.
Since the source-fibers of the holonomy groupoid identify with the generic leaf $L_0$, it follows from \cite[Thm 4]{c} that $H^1(\hol(\F),E)\cong H^1(\F,E)$. By Proposition \ref{prop:compact:foliations}, $\hol(\F)$ is a proper groupoid so its positive cohomology groups vanishes for any coefficients \cite[Prop 1]{c}. This proves the second statement. The first one follows from the isomorphism $H^2_{\textrm{def}}(\F)\cong H^{1}(\F,\nu(\F))$ (\cite[Prop 4]{cm}).
\end{proof}

Note that the first statement of the previous proposition can also be proven by directly comparing the deformation cohomology of the foliation and its holonomy groupoid, as in \cite[Thms 6.1 and 10.1]{cms}. Although the statements there demand the source-fibers to be simply connected, the vanishing of the cohomology is enough.

The fundamental fact behind the main theorem is that the holonomy groupoid of a deformation $\tilde \F$  of a compact Hausdorff foliation is a source proper groupoid over some restricted cylinder $M\times J$, and therefore the slices $\tilde\F_t$ are also compact Hausdorff. Once this is established, as in Proposition \ref{prop:H1}, we can give a cohomological version of the proof of the main theorem.

\begin{proof}[Cohomological proof of Theorem \ref{thm:main}]
Let $\tilde\F$ be a deformation of a compact Hausdorff foliation $\F$ with $H^1(L_0)=0$. By Proposition \ref{prop:H1} the restriction $\tilde \F|_{M\times J}$ is also compact, Hausdorff, for some $J\subset I$, and hence so is $\F_t$ for each $t\in J$, By Proposition \ref{prop:def-coh}, the deformation cohomology of $\F_t$ vanishes for $t\in J$. It follows that the deformation cohomology classes $[c_t]\in H^2_{\textrm{def}}(\F_t)$  all vanish for $t\in J$.  We claim that, moreover, these classes vanish smoothly with respect to $t\in J$. As discussed in Remark \ref{rmk:smooth-vanishing}, this means that $[c]\in H^1(\tilde F|_{M\times J},K|_{M\times J})$ is trivial. But again by Proposition \ref{prop:def-coh}, the whole group $H^1(\tilde F|_{M\times J},K|_{M\times J})$ is trivial.
The result now follows by an algebroid version of Moser's trick: a primitive of $c$ is a time-dependent vector field $X=\{X_t\}$ which gives an isotopy trivializing $\tilde\F$ (see \cite[Thm 2]{cm}).
\end{proof}

{

}


\end{document}